%

\documentclass[12pt,a4paper]{article} 
\pagestyle{plain}

\usepackage{amsmath}
\usepackage{amsfonts}
\usepackage{amssymb}
\usepackage{cite}
\usepackage{graphics}
\usepackage{amssymb,amscd}
\usepackage[utf8]{inputenc}
\usepackage[T2A]{fontenc}
\usepackage{tikz-cd}
\usepackage[russian,english]{babel}

\usepackage{amscd}
\usepackage{yfonts}
\usepackage[normalem]{ulem}
\usepackage{esint}
\usepackage{amsmath}
\usepackage{amsfonts}
\usepackage{amssymb}
\usepackage{hyperref}
\usepackage{verbatim}

\usepackage{graphics}
\usepackage{amsfonts,amssymb,amsmath,mathrsfs,amsthm,amscd}
\usepackage{tikz} 
\usepackage{tikz-cd} 
\usetikzlibrary{decorations.pathreplacing}

\unitlength=1mm

\numberwithin{equation}{subsection}

 \newtheorem{theorem}{Theorem}[section]
 \newtheorem{corollary}[theorem]{Corollary}
 
 \newtheorem{proposition}[theorem]{Proposition}

\theoremstyle{definition}

\newtheorem{remark}{Remark}
\newtheorem{example}{Example}

\newtheorem{definition}[theorem]{Definition}

\providecommand{\keywords}[1]{\textbf{\textit{Keywords:}} #1}

\begin{document}

\title{A note on deformations and mutations \\ of  fake weighted projective planes}
\author{\.{I}rem Portakal}

\newcommand{\Addresses}{{
  \bigskip
  \footnotesize

  \textsc{Department of Mathematics, Otto-von-Guericke-University, Magdeburg, Germany}\par\nopagebreak
  \textit{E-mail address}, \.{I}.~Portakal: \texttt{irem.portakal@ovgu.de}

}}

\maketitle

\begin{abstract}
It has been shown by Hacking and Prokhorov that if the projective surface $X$ with quotient singularities and self-intersection number 9 has a smoothing to the projective plane, then $X$ is the general fiber of a $\mathbb{Q}$-Gorenstein deformation of the weighted projective plane with weights giving solutions to the Markov equation. This result has been understood and generalized by combinatorial mutations of Fano triangles by Akhtar, Coates, Galkin, and Kasprzyk. In this note, we study this result by utilizing polarized T-varieties and describe the associated deformation explicitly in terms of certain Minkowski summands of so-called divisorial polytopes. 
\end{abstract}

\keywords{Toric degenerations, Markov equation, Minkowski summands, weighted projective plane, $T$-variety, $\mathbb{Q}$-Gorenstein deformation, combinatorial mutation, divisorial polytope.}


\section{Introduction}\label{Introduction}
In their paper \cite{hackingprokhorov}, Hacking and Prokhorov generalize Manetti's classification of surfaces $X$ with self intersection number $K_{X}\geq 5$. They classify all surfaces which admit a $\mathbb{Q}$-Gorenstein smoothing. As a corollary to their theorem, they show that if $X$ is a projective surface with quotient singularities and with $K_{X} ^ 2 =9$ which admits a smoothing to the projective plane, then $X$ is a $\mathbb{Q}$-Gorenstein deformation of the weighted projective plane $\mathbb{P}(a^2,b^2,c^2)$ where $(a,b,c)$ is a solution for the Markov equation, i.e. $a^2+b^2+c^2=3abc$ (Theorem \ref{motivation}). The Markov equation also appears in the context of derived categories. The solutions of the equation are in one-to-one correspondence with the exceptional bundles on the projective plane with slopes in the interval $[0,1/2]$ (\cite{rudakov}). \\

\noindent It turns out that we can understand this smoothing in terms of deformations of T-varieties. A normal variety $X$ is called a complexity-$d$ $T$-variety if it admits an effective algebraic torus action $T\cong (\mathbb{C}^*)^n$ such that $\text{dim}(X)-\text{dim}(T) = d$. The deformations of $T$-varieties have been studied by Altmann in \cite{alt1} and by Ilten and Vollmert in \cite{ilthrob1}. The key idea of the theory is to associate certain Minkowski decompositions to deformations. In this note, we consider the deformations of projective toric surfaces by interpretating them as complexity-one polarized $T$-varieties. To a polarized $T$-variety, one can associate a so-called divisorial polytope consisting of a lattice polytope and piecewise affine concave function. In \cite{ilthen1}, a technique has been introduced to deform these varieties by using admissible Minkowski summands of the associated piecewise affine concave function. The latest improvement on $\mathbb{Q}$-Gorenstein smoothings has been done combinatorially by Akhtar, Coates, Galkin and, Kasprzyk in \cite{mutations2} in terms of combinatorial mutations of Fano polytopes. This construction generalizes the result of Hacking and Prokhorov to fake projective planes satisfying a certain type of Diophantine equation. In fact, for this combinatorial mutation, one can relate a birational transformation. Let $f$ be a Laurent polynomial in $n$ variables. A mutation of $f$ is a birational transformation $\phi\colon(\mathbb{C}^{*})^n \dashrightarrow (\mathbb{C}^{*})^n$ such that it preserves the logarithmic volume form of the algebraic torus $(\mathbb{C}^{*})^n$ and $\phi^{*}f$ is again a Laurent polynomial. In \cite{ilthen3}, Ilten shows that for given such a mutation, there exists a flat projective family over $\mathbb{P}^1$ such that zero fiber is $\text{TV}(\triangle(f))$ and the infinity fiber is $\text{TV}(\triangle(\phi^*f))$. The proof involves the mentioned language of affine $T$-varieties via Minkowski decompositions by taking the cone over these projective varieties.\\
 
\noindent It is natural to try to relate the combinatorial mutations to deformations of projective varieties using lattice polytopes and T-varieties. Indeed, we associate a one-parameter deformation to a mutation of Fano triangle by utilizing divisorial polytopes. It gives an explicit exposition of the Minkowski summands of the associated deformation. In \cite{batyrev}, Baytrev proposed a method to construct Landau-Ginzburg model mirror dual to a smooth Fano variety $X$ as a Laurent polynomial $f$ such that the Newton polytope $\text{Newt}(f)$ is the fan polytope of the small toric degeneration of $X$. The choice of the Laurent polynomial $f$ is not unique, however one can always transform $f$ to another Laurent polynomial via mutations which is also mirror dual to $X$. Although the notion of small degenerations is restrictive, this model has been extended to any $\mathbb{Q}$-Gorenstein degeneration of the complex projective plane in \cite{galkin}. In particular, this is the case of Prokhorov-Hacking's deformations and it is another reason that we focus our attention into two dimensional case.\\

\noindent Remark that by toric degenerations we mean that the variety degenerates to a toric variety as one can guess. We say that $X$ degenerates to a toric variety $X_{0}$ if there exists a flat morphism over a smooth curve germ such that the general fiber is isomorphic to $X$ and the zero fiber is isomorphic to the toric variety $X_{0}$. This may be also expressed as the deformation of $X_0$ over a smooth curve germ.

\begin{definition}
Let $X$ be a normal surface with quotient singularities and Picard rank 1. We say that $X$ is a $\mathbb{Q}$-Gorenstein deformation of $X_0$ if there exists a deformation $\mathcal{X}$ over a smooth curve germ such that $X_{t}$ is isomorphic to $X$ for all $t\neq0$ and $K_{\mathcal{X}}$ is $\mathbb{Q}$-Cartier. If $X$ is smooth, the deformation is called $\mathbb{Q}$-Gorenstein smoothing.
\end{definition}

\noindent Prokhorov and Hacking prove the following result.

\begin{theorem}[\cite{hackingprokhorov}, Corollary 1.2]
Let $X$ be a projective surface with quotient singularities and $K_{X} ^ 2 =9$. If $X$ admits a smoothing to the complex projective plane, then $X$ is a $\mathbb{Q}$-Gorenstein deformation of $\mathbb{P}(a^2,b^2,c^2)$ where $a^2+b^2+c^2=3abc$.
\label{motivation}
\end{theorem}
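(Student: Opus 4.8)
The plan is to show that the two hypotheses — quotient singularities that are locally $\mathbb{Q}$-Gorenstein smoothable, and a global smoothing whose general fibre is $\mathbb{P}^2$ — force the local singularity types to be extremely rigid, and then to read off the Markov equation from a single topological count. First I would record the invariants preserved by a $\mathbb{Q}$-Gorenstein deformation $\mathcal X\to\Delta$ with $X_0=X$ and $X_t\cong\mathbb P^2$: the self-intersection is constant, so $K_X^2=K_{X_t}^2=9$, and the anticanonical class of the special fibre is nef, being a flat limit of the ample classes $-K_{X_t}$. By the theorem of Koll\'ar--Shepherd-Barron, a quotient singularity admits a local $\mathbb{Q}$-Gorenstein smoothing if and only if it is a \emph{T-singularity}; these are cyclic, of type $\frac{1}{d n^2}(1,\,dna-1)$ with $\gcd(n,a)=1$. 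Hence every singular point of $X$ is of this form, and no non-cyclic quotient singularities can occur.

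The decisive step is a count of the smoothing. The $\mathbb{Q}$-Gorenstein Milnor fibre $F_i$ of a T-singularity of type $\frac{1}{d_i n_i^2}(1,\,d_i n_i a_i-1)$ satisfies $b_1(F_i)=0$ and $b_2(F_i)=d_i-1$. Since a quotient singularity is a rational homology manifold, its cone neighbourhood is $\mathbb{Q}$-acyclic, and comparing the rational cohomology of $X$ with that of the general fibre (replacing the cone neighbourhoods by the $F_i$) gives
\[
b_2(\mathbb P^2)=b_2(X)+\sum_{i=1}^{s}(d_i-1),
\]
where $s$ is the number of singular points. As $X$ is projective, $b_2(X)\ge 1$; each $d_i\ge 1$; and $b_2(\mathbb P^2)=1$. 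Both inequalities are therefore equalities, so $b_2(X)=1$ — whence $X$ has Picard rank one, as in the Definition — and $d_i=1$ for every $i$. Thus all singularities of $X$ are \emph{Wahl singularities} $\frac{1}{n_i^2}(1,\,n_i a_i-1)$, whose Milnor fibres are rational homology balls. Combining Picard rank one with the facts that $-K_X$ is nef and $K_X^2=9>0$ (hence big) shows that $-K_X$ is ample, so $X$ is a log del Pezzo surface.

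It remains to prove that there are exactly three such points, that their orders satisfy $a^2+b^2+c^2=3abc$ with $(n_1,n_2,n_3)=(a,b,c)$, and that $X\cong\mathbb P(a^2,b^2,c^2)$. Here I would pass to the minimal resolution $\widetilde X\to X$, a smooth rational surface with $b_2(\widetilde X)=1+\sum_i\ell_i$ and $K_{\widetilde X}^2=9-\sum_i\ell_i$, where $\ell_i$ is the length of the Hirzebruch--Jung chain over the $i$-th point. Because $\rho(X)=1$, contracting the exceptional chains collapses $\widetilde X$ to a Picard-rank-one surface; running the minimal model program relative to this configuration and tracking the self-intersections and discrepancies of the Wahl chains should produce a Diophantine relation among the orders $n_i$. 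Matching this against the contributions of the local class groups to $K_X^2=9$ is what I expect to yield precisely the Markov equation and, at the same time, to exhibit the fan of $X$ as that of $\mathbb P(a^2,b^2,c^2)$; the degenerate triples $(1,1,1)$, $(1,1,2)$, $(1,2,5)$ arise as the cases with fewer than three singular points.

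I expect the genuine obstacle to lie in this last step. The topological count pins down the local structure cleanly and with little effort, but deducing the \emph{global} isomorphism type — in particular that $X$ is toric and that the $n_i$ satisfy the Markov equation rather than some weaker numerical constraint — needs the full structure theory of $\mathbb{Q}$-Gorenstein smoothable log del Pezzo surfaces of Picard rank one, i.e. the analysis of extremal $P$-resolutions and the associated minimal model program, which is the technical core of Hacking--Prokhorov's argument. A secondary point requiring care is the passage from ``$-K_X$ nef'' to ``$-K_X$ ample,'' for which the rational-homology-ball property of the Wahl Milnor fibres (and the resulting equality $b_2(X)=1$) is the essential ingredient.
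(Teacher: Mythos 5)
First, a point of order: the paper you were asked about does not prove this statement at all --- it is imported verbatim as \cite{hackingprokhorov}, Corollary 1.2 --- so the benchmark is Hacking--Prokhorov's own argument. Measured against that, the first half of your proposal is essentially the correct and standard opening move: the comparison of $X$ with the general fibre via Milnor fibres of the local smoothings, forcing $b_2(X)=1$ (hence $\rho(X)=1$) and $d_i=1$ at every singular point, so that all singularities are Wahl singularities with rational homology ball Milnor fibres, and $X$ is a log del Pezzo of degree $9$. Two local repairs are needed even here. (i) ``$-K_X$ is nef, being a flat limit of the ample classes $-K_{X_t}$'' is not a valid inference: curves in the special fibre need not deform to nearby fibres, and nefness does not specialize. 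The standard fix is semicontinuity, $h^0(X,-mK_X)\ge h^0(\mathbb{P}^2,\mathcal{O}(3m))>0$, so $-K_X$ is effective and nonzero, and then $\rho(X)=1$ together with $K_X^2=9\neq 0$ yields ampleness --- note this forces you to establish $\rho(X)=1$ \emph{before}, not after, the positivity of $-K_X$. (ii) Invoking Koll\'ar--Shepherd-Barron at the outset presumes the given smoothing is $\mathbb{Q}$-Gorenstein, which the hypothesis (``admits a smoothing to $\mathbb{P}^2$'') does not grant; e.g.\ the Artin-component smoothing of $\frac14(1,1)$ is not $\mathbb{Q}$-Gorenstein and changes $K^2$. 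The correct order is the reverse of yours: the Betti count forces each local Milnor fibre to be a rational homology ball, and the classification of such smoothings then identifies the singularities as Wahl and places the local deformations in the $\mathbb{Q}$-Gorenstein component.

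The genuine gap is where you yourself locate it, and it is not a small one: the actual content of the theorem --- that $X$ is a $\mathbb{Q}$-Gorenstein deformation of $\mathbb{P}(a^2,b^2,c^2)$ with $(a,b,c)$ a Markov triple --- is deferred to ``the full structure theory of $\mathbb{Q}$-Gorenstein smoothable log del Pezzo surfaces \ldots\ the technical core of Hacking--Prokhorov's argument,'' i.e.\ the hard part is assumed rather than proved. Moreover, the sketch you offer for closing it (minimal resolution $\widetilde X\to X$, then an MMP ``relative to this configuration'' tracking the Wahl chains) is not how the argument runs and is unlikely to close as stated: surface-level bookkeeping on $\widetilde X$ yields only Noether-type numerics such as $K^2_{\widetilde X}=9-\sum_i\ell_i$, which constrain but do not determine the global isomorphism type, let alone show $X$ is toric. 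Hacking--Prokhorov instead run a relative MMP on the \emph{three-dimensional total space} $\mathcal{X}\to\Delta$ of the smoothing; extremal neighbourhoods, flips and divisorial contractions modify the degenerate fibre, and the resulting induction is precisely the Markov mutation $(a,b,c)\mapsto(b,c,3bc-a)$ --- the same mutation structure that the present paper re-encodes combinatorially (Example \ref{weightsmutation}, Proposition \ref{mutationtriangle}, and the deformation-theoretic counterpart in Theorem \ref{compar}). So: the easy half of the proof is essentially right modulo the two repairs above, but the theorem's core is a real hole, honestly flagged yet unfilled, and the proposed route to fill it points at the wrong (two-dimensional) MMP.
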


\noindent We denote the one-parameter subgroups of an algebraic torus $T\cong (\mathbb{C}^*)^n$ by $N$ and the characters of $T$ by $M$. There is a natural bilinear pairing between these two lattices which is the usual dot product $${ \langle \bullet , \bullet \rangle} \colon M \times N \rightarrow \mathbb{Z}.$$ 

\noindent We let $N_{\mathbb{Q}} := N \otimes_{\mathbb{Z}} \mathbb{Q}$ and $M_{\mathbb{Q}}:= M \otimes_{\mathbb{Z}} \mathbb{Q}$ be the corresponding vector spaces to the lattices $N$ and $M$.

\section{Mutations of Fano polytopes}\label{mutations}

The notion of mutations of lattice polytopes has been introduced by Akhtar, Galkin, Coates, and Kaspryzk in their paper \cite{mutations2}. This construction is motivated by certain birational transformations of Laurent polynomials which have been studied in Galkin and Usnich's paper \cite{galkin}. Let $f \in \mathbb{C}[x_1^{\pm},\ldots,x_n^{\pm}]$ be a Laurent polynomial. The period of $f$ is defined as\\
$$\pi_f (t) := \left(\frac{1}{2\pi i}\right)^n \int_{|x_1|=\dots|x_n|=1} \frac{1}{1-tf(x_1,\ldots x_n)} \frac{dx_1}{x_1}\dots \frac{dx_n}{x_n}$$
\vspace{0.1cm}

\noindent where $t \in \mathbb{C}$ and $|t|\ll \infty$. A mutation of $f$, as in Definition 7 of \cite{galkin}, is a birational transformation $\phi \in \text{Aut} (\mathbb{C}(x_1, \dots, x_n))$ preserving the period $\pi_f$ such that $\phi(f)$ is again a Laurent polynomial. We are in particular interested in the following mutation.
\begin{example}\label{birationalmutation}
Let $g \in \mathbb{C}[x_2^{\pm},\ldots,x_n^{\pm}]$ be a Laurent polynomial. The birational transformation
$\phi: (x_1, \ldots, x_n) \mapsto (x_1/g, x_2, \ldots, x_n)$ is a mutation of $f$ if and only if $f$ can be written as $\sum_{i=k}^l f_i x_{1}^i$ where $f_i \in  \mathbb{C}[x_2^{\pm},\ldots,x_n^{\pm}] $ such that for $i>0$, $f_i/g^i \in \mathbb{C}[x_2^{\pm},\ldots,x_n^{\pm}]$.
\end{example}

\noindent Mutations of Laurent polynomials induce transformations of their associated Newton polytopes, which we call \emph{combinatorial mutations}. Moreover, if $0 \in \text{Newt}(f)$, then above-presented type of mutation of $f$ has been associated to the deformation of the toric variety of $\text{Newt}(f)$. Note that in this case, we require that $k<0$ and $l>0$. We will explain this relation in Section \ref{comparsec}. 

\subsection{Combinatorial mutations}
\noindent Let $P \subset N_{\mathbb{Q}}$ be a full dimensional convex lattice polytope. We say $P$ is \emph{Fano}, if the origin lies in the strict interior of $P$ and the vertices of $P$ are primitive lattice points. The dual of $P$ is defined as 
$$P^{*} = \{u \in M_{\mathbb{Q}} \ | \ \langle u, v \rangle \geq -1, \ \forall v \in P \} \subset M_{\mathbb{Q}}.$$
\noindent For the construction of combinatorial mutations, we follow \cite{mutations,mutations2}.\\

\noindent Let $w \in M$ be a primitive lattice vector. We set\\
$$h_{\text{min}}:= \text{min}\{ \langle w, v\rangle | v\in P\}, \ \   h_{\text{max}}:= \text{max}\{ \langle w, v\rangle | v\in P\}.$$
\vspace{0.1cm}

\noindent Since $P$ is a Fano polytope, we obtain that $h_{\text{min}}<0$ and $h_{\text{max}}>0$. Note that the lattice height function $w: N \rightarrow \mathbb{Z}$ naturally extends to $w_{\mathbb{Q}}: N_{\mathbb{Q}} \rightarrow \mathbb{Q}$. 

\begin{definition}
We say that a lattice point $v \in P$ is at height $m$ with respect to $w \in M$, if $\langle w,v \rangle = m$. For each height $h \in \mathbb{Z}$, we define a hyperplane $$H_{w,h} := \{x \in N_{\mathbb{Q}} \ | \ \langle w, x \rangle =h\}.$$
\end{definition}
\noindent We let $w_h(P) = \text{conv}(H_{w,h} \cap P \cap N)$.
\begin{definition}
A factor of $P$ with respect to $w$ is a lattice polytope $F \subset N_{\mathbb{Q}}$ satisfying:

\begin{enumerate}
\item $\langle w, v \rangle = 0$ for all $v \in F$.
\item For every $h \in \mathbb{Z}$ such that $h_{\text{min}} \leq h < 0$, there exists a (possibly empty) lattice polytope $G_h \subset N_{\mathbb{Q}}$ at height $h$ such that 
$$H_{w,h} \cap \text{vert}(P) \subseteq G_h + (-h)F \subseteq w_h(P).$$
\end{enumerate}
\end{definition}

\begin{remark}
A factor does not need to exist for every $w \in M$. If it exists, then we are in a position to define a combinatorial mutation of $P$. For a given combinatorial mutation of $P$, there exists a Laurent polynomial $f$ with its Newton polytope $\text{Newt}(f)$ to be equal to $P$ and the combinatorial mutation arises from an algebraic mutation. 
\end{remark}

\begin{definition}
The combinatorial mutation of $P$ with respect to height function $w$, factor $F$, and lattice polytopes $\{G_h\}$ is defined as the convex lattice polytope\\
$$\text{mut}_w(P,F;\{G_h\}) := \text{conv} \left(\bigcup^{-1}_{h=h_{\text{min}}} G_h \cup \bigcup_{h=0}^{h_{\text{max}}}(w_h (P) + hF  )\right) \subset N_{\mathbb{Q}}.$$
\end{definition}
\vspace{0.1cm}
\noindent This definition might appear technical at first glance. Let us construct an explicit combinatorial mutation induced by the birational transformation in Example \ref{birationalmutation}.

\begin{example}
Let $f= \sum_{i=k}^l f_i x_2^{i} \in \mathbb{C}[x_1^{\pm},x_2^{\pm}]$ with $k<0$ and $l>0$, and let $f_i, g \in \mathbb{C}[x_1^{\pm}]$ be Laurent polynomials for $i \in [k,l]$. Suppose that $g^i  \ | \ f_i$ for $i>0$. Consider the mutation $$\phi \colon (x_1,x_2) \rightarrow (x_1, x_2/g).$$ The Laurent polynomial $\phi^{*}f$ is the algebraic mutation of $f$ and can be written as $\sum_{i=k}^l (f_i/g^i) x_2^i \in \mathbb{C}[x_1^{\pm},x_2^{\pm}]$. Let $P:=\text{Newt}(f) \subset N_{\mathbb{Q}}$ and $Q:=\text{Newt}(\phi^* f) \subset N_{\mathbb{Q}}$. We set the height function for the induced combinatorial mutation $w=[0,1] \in M$ and the factor with respect to $w$ as $F = \text{conv}(0, (1,0)) \subset N_{\mathbb{Q}}$. First infer that, for $k \leq h \leq -1$, the lattice polytopes $G_h$ are $\text{Newt}(f_h g^hx_2^ h)$. Moreover, by the definition of the factor, we have the condition $\text{Newt}(f_h g^hx_2^ h) + [0,h] \subseteq w_h(P)$. Hence, we obtain that $g = 1 +x_1$ and observe that $F=\text{Newt}(g)$. 

\label{combicorres}
\end{example}

\noindent In fact, a combinatorial mutation $P \subset N_{\mathbb{Q}}$ induces a piecewise linear transformation $\varphi$ of $P^{*} \subset M_{\mathbb{Q}}$ such that $(\varphi(P^{*}))^{*} = \text{mut}_w(P,F;\{G_h\})$ and it is given by $$u \mapsto u-u_{\text{min}}w$$ where $u_{\text{min}} = \text{min}\{\langle u, v_F \rangle\ | \ v_F \in \text{vert}(F)\}$.

\subsection{Mutations of Fano triangles}
In this section, we bring our attention to the two dimensional case, more particularly to combinatorial mutations of Fano triangles. Let $P:= \text{conv}(v_1,v_2,v_3) \subset N_{\mathbb{Q}}$ be a Fano triangle. Since the origin is contained in the interior of $P$, there exists a unique choice of coprime positive integers $(\lambda_1, \lambda_2, \lambda_3)$ such that $\lambda_1 v_1 + \lambda_2 v_2 + \lambda_3 v_3 = 0$. Let $N' \subset N$ be the sublattice generated by the lattice points $v_i$. The associated projective toric surface $\text{TV} (P)$ is the {\it fake projective plane} with weights $\lambda_1, \lambda_2, \lambda_3$, i.e.\ it is the quotient of $\mathbb{P}(\lambda_1, \lambda_2, \lambda_3)$ by the action of the finite group $N/N'$ acting free in codimension one.   

\begin{figure}
\begin{center}
\includegraphics[width=4in]{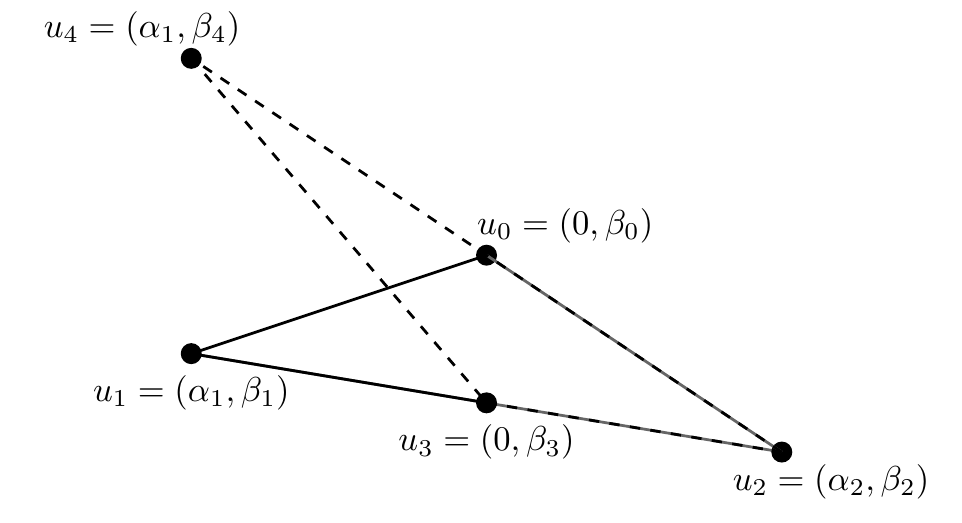}
\caption{Mutation $\Psi$ of a dual of a Fano triangle in $M_{\mathbb{Q}}$}
\label{fig1}
\end{center}
\end{figure}

\begin{proposition}[\cite{mutations}, Proposition 3.3]
Let $P \subset N_{\mathbb{Q}}$ be a Fano triangle and $\text{TV}(P)$ be the fake weighted projective plane with weights $(\lambda_1,\lambda_2,\lambda_3)$. Suppose that there exists a mutation of $P$ as $Q = \text{mut}_w(P,F; \{G_h\})$ for some height function $w$ and factor $F$ such that $\text{TV}(Q)$ is  a fake weighted projective plane. Then, up to relabelling, $\lambda_1 \ | \ (\lambda_2 + \lambda_3)^2$ and $\text{TV}(Q)$ has weights
$$\left(\lambda_2, \lambda_3, \frac{(\lambda_2 + \lambda_3)^2}{\lambda_1}\right).$$
\label{weighttheorem}
\end{proposition}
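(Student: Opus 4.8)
The plan is to put the mutation into a normal form, compute $Q$ and its vertices explicitly, and then extract the new weights from the defining linear relation.

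First I would choose coordinates on $N$ so that the height function is $w=(0,1)$, i.e. $\langle w,(x,y)\rangle=y$; then a factor $F$, being contained in $H_{w,0}$, is a horizontal segment, say $F=\mathrm{conv}(0,(\ell,0))$ after a shear fixing $w$. The crucial first observation is that the factor condition pins down the shape of $P$: at the lowest height $h_{\min}<0$ the inclusion $H_{w,h_{\min}}\cap\mathrm{vert}(P)\subseteq G_{h_{\min}}+(-h_{\min})F\subseteq w_{h_{\min}}(P)$ requires $w_{h_{\min}}(P)$ to contain the non-degenerate segment $(-h_{\min})F$. Hence $P$ cannot have a unique lowest vertex, so two of its vertices lie on a horizontal bottom edge at height $h_{\min}$ and the third is an apex strictly above height $0$ (the origin being interior forces $h_{\max}>0$). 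Relabelling so that the apex is $v_1$ (weight $\lambda_1$) and the bottom vertices are $v_2,v_3$ (weights $\lambda_2,\lambda_3$) is exactly the ``up to relabelling'' in the statement. I write $v_1=(a,h_1)$ with $h_1=h_{\max}$, and $v_2=(p,-m)$, $v_3=(p+m\ell,-m)$ with $m=-h_{\min}$.

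Next I would run the mutation. For $h\ge 0$ the summand $hF$ pushes the right endpoint of the cross-section right by $h\ell$ and fixes the left endpoint, while for $h<0$ the maximal admissible $G_h=w_h(P)\ominus(-h)F$ retracts the right endpoint left by $(-h)\ell$; uniformly the left edge of $P$ is preserved and the right endpoint moves by $+h\ell$. Demanding that $\mathrm{TV}(Q)$ be a fake weighted projective plane, i.e. that $Q$ be a triangle rather than a trapezoid, forces the mutated bottom cross-section to collapse to a point, which pins the bottom-edge length to $(-h_{\min})\ell=m\ell$ (the case $L>m\ell$ gives a quadrilateral and $L<m\ell$ violates the factor condition). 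One then reads off $Q=\mathrm{conv}\bigl((p,-m),(a,h_1),(a+h_1\ell,h_1)\bigr)$. With $Q$ in hand I extract the weights: the relation $\sum_i\lambda_i v_i=0$ gives the two identities $\lambda_1 h_1=(\lambda_2+\lambda_3)m$ and $\lambda_1 a+(\lambda_2+\lambda_3)p+\lambda_3 m\ell=0$, and solving for the kernel of the matrix with columns $(p,-m),(a,h_1),(a+h_1\ell,h_1)$ — equivalently computing its $2\times2$ minors — and substituting these identities, the weight vector of $Q$ comes out proportional to $\bigl((\lambda_2+\lambda_3)^2/\lambda_1,\ \lambda_2,\ \lambda_3\bigr)$, independently of $\ell$. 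In particular the two surviving vertices keep weights in the ratio $\lambda_2:\lambda_3$ and the new apex carries weight $(\lambda_2+\lambda_3)^2/\lambda_1$, the asserted triple (up to the usual common rescaling of the weights of a weighted projective space).

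The main obstacle is the integrality claim $\lambda_1\mid(\lambda_2+\lambda_3)^2$, which must be forced by the lattice geometry rather than assumed. Here I would combine three inputs: integrality of the apex height, $h_1=(\lambda_2+\lambda_3)m/\lambda_1\in\mathbb{Z}$, giving $\lambda_1\mid(\lambda_2+\lambda_3)m$; integrality of $a$, giving $\lambda_1\mid(\lambda_2+\lambda_3)p+\lambda_3 m\ell$; and primitivity of the bottom vertex $v_2=(p,-m)$, giving $\gcd(p,m)=1$. A prime-by-prime valuation argument then finishes: for a prime $\pi$ with $\pi^e\,\|\,\lambda_1$, coprimality of $p$ and $m$ forces $\pi\nmid p$ or $\pi\nmid m$, and the two divisibilities yield $v_\pi(\lambda_2+\lambda_3)\ge\max\{e-v_\pi(m),\,v_\pi(m)\}\ge e/2$, hence $\pi^e\mid(\lambda_2+\lambda_3)^2$; running over all $\pi\mid\lambda_1$ gives the divisibility. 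I expect the two genuinely delicate points to be this valuation step and, upstream of it, the rigorous justification that ``$Q$ a triangle'' forces precisely the bottom-edge/apex normal form together with the length relation $L=m\ell$.
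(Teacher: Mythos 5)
Your proposal addresses a statement that the paper itself does not prove: Proposition \ref{weighttheorem} is imported verbatim from Akhtar--Kasprzyk (\cite{mutations}, Proposition 3.3), so there is no internal proof to compare against, and the relevant benchmark is the cited source. Measured against that, your argument is correct and follows the same basic strategy as Akhtar--Kasprzyk: normalize $w=(0,1)$ and $F=\mathrm{conv}(0,(\ell,0))$ by a $GL_2(\mathbb{Z})$ change of coordinates and a translation of $F$, observe that the factor condition at $h_{\min}$ forces the minimal face of $P$ to be a horizontal edge (a point cross-section cannot contain a nonempty $G_{h_{\min}}+(-h_{\min})F$ with $F$ nondegenerate), show that $Q$ being a triangle forces the bottom edge length to equal $(-h_{\min})\ell$, and then read the new weights off the kernel of the vertex matrix. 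I verified the computation: with $v_1=(a,h_1)$, $v_2=(p,-m)$, $v_3=(p+m\ell,-m)$, the relations $\lambda_1h_1=(\lambda_2+\lambda_3)m$ and $\lambda_1a+(\lambda_2+\lambda_3)p+\lambda_3m\ell=0$ do yield $\mu_2:\mu_3=\lambda_2:\lambda_3$ and $\mu_1=(\lambda_2+\lambda_3)^2/\lambda_1$ up to scale, independently of $\ell$, and your three divisibility inputs close the integrality claim.

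Three small points worth repairing, none fatal. First, in the valuation step your inequality $v_\pi(\lambda_2+\lambda_3)\ge\max\{e-v_\pi(m),\,v_\pi(m)\}$ is not literally what the two divisibilities give when $v_\pi(m)>e$; the correct bound from $\pi^e\mid(\lambda_2+\lambda_3)p+\lambda_3m\ell$ with $\pi\nmid p$ is $v_\pi(\lambda_2+\lambda_3)\ge\min\{e,\,v_\pi(m)\}$, which together with $v_\pi(\lambda_2+\lambda_3)\ge e-v_\pi(m)$ still forces $2\,v_\pi(\lambda_2+\lambda_3)\ge e$ in all cases, so the conclusion stands. Second, you implicitly assume $F$ is nondegenerate (a point factor gives a trivial mutation) and you use, when taking the maximal $G_h=w_h(P)\ominus(-h)F$ at intermediate negative heights, that the mutation is independent of the choice of $\{G_h\}$; both facts are standard (they appear in \cite{mutations2}) but should be cited rather than left tacit. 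Third, to conclude that the weight triple of $Q$ is exactly $(\lambda_2,\lambda_3,(\lambda_2+\lambda_3)^2/\lambda_1)$ rather than a rescaling, you need $\gcd(\lambda_2,\lambda_3,(\lambda_2+\lambda_3)^2/\lambda_1)=1$; this follows because the weights of a Fano triangle are automatically pairwise coprime (if $d\mid\gcd(\lambda_2,\lambda_3)$ then $d\mid\lambda_1v_1$ and $\gcd(d,\lambda_1)=1$ would contradict primitivity of $v_1$), a one-line lemma your ``usual common rescaling'' remark glosses over.
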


\noindent As a consequence to this proposition, one can show that the weights of $\text{TV}(P)$ and $\text{TV}(Q)$ belong to the solution set of the Diophantine equation \begin{equation} mx_1x_2x_3 = k(c_1 x_1^2 + c_2 x_2^2 + c_3x_3^2)\label{diophantine}\end{equation} where $m,k,c_i \in \mathbb{Z}_{>0}$ and $c_i$ is square free. Remark that a special case of this equation is the Markov equation which has been mentioned in the introduction.

\begin{proposition}[\cite{mutations}, Proposition 3.12]\label{mutationtriangle}
Let $P$ and $Q$ be Fano triangles as in Proposition \ref{weighttheorem}. Then the weights of $\text{TV}(P)$ and $\text{TV}(Q)$ give solutions to the same Diophantine equation (\ref{diophantine}). 
\end{proposition}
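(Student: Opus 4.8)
The plan is to make the Diophantine equation (\ref{diophantine}) explicit from the weights and then track how it transforms under the mutation. I would write each weight as $\lambda_i = c_i d_i^2$ with $c_i$ square-free and $d_i \in \mathbb{Z}_{>0}$; this is precisely the reduction that produces the square-free coefficients $c_i$, so that $(d_1, d_2, d_3)$ is the solution of (\ref{diophantine}) associated to $\text{TV}(P)$ and the ratio $m/k$ is pinned down by $\frac{m}{k} = \frac{c_1 d_1^2 + c_2 d_2^2 + c_3 d_3^2}{d_1 d_2 d_3}$ in lowest terms. By Proposition \ref{weighttheorem}, $\text{TV}(Q)$ has weights $(\lambda_2, \lambda_3, \lambda_0)$ with $\lambda_0 = (\lambda_2 + \lambda_3)^2/\lambda_1$; since $\lambda_2,\lambda_3$ are unchanged, the whole question reduces to comparing the reduction of $\lambda_0$ with that of $\lambda_1$, and then matching $m/k$.

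The first key point is that $\lambda_1 \lambda_0 = (\lambda_2 + \lambda_3)^2$ is automatically a perfect square. Writing $\lambda_0 = c_0 d_0^2$ with $c_0$ square-free, the product $\lambda_1 \lambda_0 = c_1 c_0 (d_1 d_0)^2$ is a square exactly when $c_1 c_0$ is, and since both factors are square-free this forces $c_0 = c_1$. Hence the weights of $\text{TV}(Q)$ reduce as $(c_2 d_2^2, c_3 d_3^2, c_1 d_0^2)$, so its square-free coefficient multiset $\{c_2, c_3, c_1\}$ agrees with $\{c_1, c_2, c_3\}$. This already matches the $c_i$ of the two equations up to relabelling.

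It then remains to match the ratio $m/k$. Taking positive square roots in $\lambda_1 \lambda_0 = (\lambda_2 + \lambda_3)^2$ and substituting the reductions yields the crucial identity $c_1 d_1 d_0 = c_2 d_2^2 + c_3 d_3^2$. Using it, the $\text{TV}(P)$-numerator becomes $c_1 d_1^2 + c_2 d_2^2 + c_3 d_3^2 = c_1 d_1^2 + c_1 d_1 d_0 = c_1 d_1(d_1 + d_0)$, so that ratio simplifies to $c_1(d_1 + d_0)/(d_2 d_3)$; similarly the $\text{TV}(Q)$-numerator $c_2 d_2^2 + c_3 d_3^2 + c_1 d_0^2 = c_1 d_1 d_0 + c_1 d_0^2 = c_1 d_0(d_1 + d_0)$ gives the ratio $c_1(d_1 + d_0)/(d_2 d_3)$ as well. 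The two ratios coincide, so $m$ and $k$ are unchanged and $(d_2, d_3, d_0)$ solves the very same equation (\ref{diophantine}) as $(d_1, d_2, d_3)$.

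The computations are elementary once the reductions are set up, so I do not expect a serious obstacle; the two load-bearing observations are that $\lambda_1\lambda_0$ is forced to be a perfect square, which pins down $c_0 = c_1$, and the square-root identity $c_1 d_1 d_0 = c_2 d_2^2 + c_3 d_3^2$, which collapses both ratios to the same value. The one point demanding care is bookkeeping: keeping the square-free parts $c_i$ strictly separate from the reduced parts $d_i$ and checking that $\lambda_2,\lambda_3$ reduce identically in both triples, so that the equation is literally preserved and not merely of the same shape.
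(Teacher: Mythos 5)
Your proof is correct: the decomposition $\lambda_i = c_i d_i^2$ with $c_i$ square-free, the observation that $\lambda_1\lambda_0=(\lambda_2+\lambda_3)^2$ being a perfect square forces $c_0=c_1$, and the Vieta-type identity $c_1 d_1 d_0 = c_2 d_2^2 + c_3 d_3^2$ collapsing both ratios to $c_1(d_1+d_0)/(d_2 d_3)$ are exactly the load-bearing steps, and each checks out (including the integrality of $d_0$, which follows from $\lambda_1 \mid (\lambda_2+\lambda_3)^2$ in Proposition \ref{weighttheorem}). The paper itself cites this result from \cite{mutations} without reproducing an argument, and your computation is essentially the proof given in that reference, where the same reduction and the same square-root identity (mirroring the Markov mutation $(a,b,c)\mapsto(b,c,3bc-a)$) appear.
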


\noindent In the next sections, we study how these mutations give rise to deformations and hence this previous result can be  considered as a generalization of Theorem 1.1 by Hacking and Prokhorov in \cite{hackingprokhorov}. 

\begin{example}
Consider the combinatorial mutation $\Psi$ of the triangle $P^{*} \subset M_{\mathbb{Q}}$ to the dashed triangle $Q^{*} \subset M_{\mathbb{Q}}$ in Figure \ref{fig1}. It is induced by the mutation of the type of Example \ref{birationalmutation} with $w= (0,1) \in M$ and $F = \text{conv}(0,(1,0))$. In particular, if $\text{TV}(P)$ has the weights $(a^2,b^2,c^2)$, then $\text{TV}(Q)$ has weights $(b^2,c^2, \frac{(b^2+c^2)^2}{a^2})$. Suppose now $(a,b,c)$ is a solution to the Markov equation. Since the triple $(b,c, 3bc-a)$ is also a solution to Markov equation, we obtain\\
$$(b^2,c^2, \frac{(b^2+c^2)^2}{a^2})= (b^2,c^2, \frac{(3abc-a^2)^2}{a^2})=(b^2,c^2, (3bc-a)^2).$$
\vspace{0.1cm}

\noindent and thus the weights of $\text{TV}(Q)$ also give solution to the Markov equation.
\label{weightsmutation}
\end{example}

\section{Deformations of T-varieties}\label{deformations}
T-varieties naturally appear during the study of deformations of toric varieties preserving the torus action of the embedded torus. In this case, the total space of a deformation of a toric variety is not always toric, but they admit a lower dimensional effective torus action. 
The natural question to ask is if these varieties also admit a nice combinatorial description as in the toric case. A such combinatorial construction of T-varieties has been introduced in \cite{althaus}. The affine T-varieties are in one-to-one correspondence with the so-called p-divisors. A p-divisor is a formal product of a set of polyhedra and divisors from the ``good" quotient of the T-variety by the effective torus action. For a detailed read, one can refer to the survey on the language of $T$-varieties \cite{althauspeterilten}. In this section, we restrict our attention to the combinatorial construction for polarized complexity-one T-varieties. Next, we study one-parameter deformations of these varieties in terms of admissible Minkowski decompositions.

\subsection{Polarized T-varieties and Divisorial Polytopes}

The correspondence between polarized toric varieties and lattice polytopes in toric geometry has been generalized to the complexity-one T-varieties case in terms of so-called divisorial polytopes in \cite{ilthsuess}. In this section, we present this construction and in particular we apply this construction explicitly to fake weighted projective planes. 

\begin{definition}
A divisorial polytope $(\square, \Phi)$ consists of a lattice polytope $\square \subset M'_{\mathbb{Q}}$ and a piecewise linear concave function $$\Phi=\sum_{P \in \mathbb{P}^1} \Phi_P . P \colon \square \rightarrow \text{Div}_\mathbb{Q} \mathbb{P}^1
$$ such that 
\begin{enumerate}
\item $\text{deg } \Phi(u) > 0$ for $u$ in the interior of $\square$.
\item $\text{deg } \Phi(u) > 0 $ or $\Phi(u) \sim 0$ for $u$ a vertex of $\square$.
\item For all $P \in \mathbb{P}^1$, the graph of $\Phi_P$ has its vertices in $M' \times \mathbb{Z}$.
\end{enumerate}
\end{definition}

\begin{theorem}[\cite{ilthsuess}, Theorem 3.2]
There is a one-to-one correspondence between divisorial polytopes and complexity-one T-varieties $(X,\mathcal{L})$ where $\mathcal{L}$ is an equivariant ample line bundle.
\end{theorem}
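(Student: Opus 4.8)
The plan is to exhibit mutually inverse constructions between divisorial polytopes $(\square, \Phi)$ and polarized complexity-one $T$-varieties $(X,\mathcal{L})$, in close analogy with the classical dictionary between lattice polytopes and polarized toric varieties, the extra ``curve direction'' being recorded by the divisorial coefficients $\Phi_P$ over the points of $\mathbb{P}^1$. Throughout I write $M'$ for the character lattice of the acting torus $T\cong(\mathbb{C}^*)^{n-1}$ and $N'$ for its dual, and I use the rational quotient $\pi\colon X \dashrightarrow \mathbb{P}^1$.

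First, from a polarized pair to a divisorial polytope. Given $(X,\mathcal{L})$, form the section ring $R=\bigoplus_{m\geq 0} H^0(X,\mathcal{L}^{\otimes m})$. Equivariance of $\mathcal{L}$ refines the $\mathbb{Z}_{\geq 0}$-grading to an $M'\times\mathbb{Z}_{\geq 0}$-grading, and I decompose each $H^0(X,\mathcal{L}^{\otimes m})$ into $T$-weight spaces $H^0(X,\mathcal{L}^{\otimes m})_u$. Because the generic fibre of $\pi$ is a one-dimensional toric variety, the weight-$u$ part of $\pi_{*}\mathcal{L}^{\otimes m}$ is a line bundle $\mathcal{O}(D_{m,u})$ on $\mathbb{P}^1$, so that $H^0(X,\mathcal{L}^{\otimes m})_u \cong H^0(\mathbb{P}^1,\mathcal{O}(D_{m,u}))$ with $D_{m,u}=\lfloor m\,\Phi(u/m)\rfloor$ for a piecewise linear $\mathbb{Q}$-divisor-valued map $\Phi=\sum_P\Phi_P\cdot P$. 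Setting $\square$ to be the closure of $\{\,u/m : H^0(X,\mathcal{L}^{\otimes m})_u\neq 0\,\}$, the existence of sections in the interior forces $\deg\Phi(u)>0$ there, giving condition (1); the boundary behaviour of the weight spaces yields condition (2); and integrality of the line bundles $\mathcal{O}(D_{m,u})$ on $\mathbb{P}^1$ yields the lattice condition (3).

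Conversely, from $(\square,\Phi)$ I reconstruct the multigraded ring
$$R=\bigoplus_{m\geq 0}\ \bigoplus_{u\in m\square\cap M'} H^0\!\left(\mathbb{P}^1,\ \mathcal{O}(\lfloor m\,\Phi(u/m)\rfloor)\right),$$
and set $X=\operatorname{Proj} R$ with respect to the $m$-grading, the residual $M'$-grading furnishing the $T$-action and the tautological $\mathcal{O}(1)$ furnishing $\mathcal{L}$. The crucial point is that concavity of $\Phi$ is precisely the superadditivity $m\,\Phi(u/m)+m'\,\Phi(u'/m')\leq (m+m')\,\Phi\!\left(\tfrac{u+u'}{m+m'}\right)$, which after rounding down guarantees that the product of a weight-$u$ section in degree $m$ with a weight-$u'$ section in degree $m'$ lands in the weight-$(u+u')$ space in degree $m+m'$; hence $R$ is closed under multiplication. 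Conditions (1) and (2) then force finite generation with $X$ projective and $\mathcal{L}$ ample, while (3) secures the correct integral and normal structure.

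Finally I verify that the two assignments are inverse: starting from $(X,\mathcal{L})$ the section ring is recovered up to a Veronese/saturation that leaves $\operatorname{Proj}$ unchanged, and starting from $(\square,\Phi)$ the weight-space decomposition of $R$ returns the same piecewise linear data. The main obstacle is the bookkeeping of the floor functions together with the proof that concavity is \emph{equivalent} to closure under multiplication --- this is the heart of why \emph{concave} piecewise linear functions, and not arbitrary ones, parametrize polarizations --- and the matching verification that conditions (1) and (2) cut out finite generation and ampleness sharply rather than merely sufficiently. The engine making all fibres one-dimensional and the weight spaces identifiable with line bundles on $\mathbb{P}^1$ is the Altmann--Hausen correspondence between affine $T$-varieties and p-divisors, which I invoke to pass between the coefficients $\Phi_P$ and the polyhedral p-divisor coefficients in $N'_{\mathbb{Q}}$ via Legendre duality.
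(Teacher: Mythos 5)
You should first note that the paper contains no proof of this statement at all: it is quoted as Theorem 3.2 of Ilten--S\"u\ss\ \cite{ilthsuess} and used as a black box, so the only meaningful comparison is with the proof in that source. At the level of strategy, your outline does match the original: one encodes $(X,\mathcal{L})$ in its $M'\times\mathbb{Z}_{\geq 0}$-graded section ring, identifies the weight spaces with $H^0\bigl(\mathbb{P}^1,\mathcal{O}(\lfloor m\Phi(u/m)\rfloor)\bigr)$, uses concavity together with the floor inequality $\lfloor a\rfloor+\lfloor b\rfloor\leq\lfloor a+b\rfloor$ to get multiplicative closure, and runs the whole construction through the Altmann--Hausen formalism, with the p-divisor/divisorial-fan data obtained from the $\Phi_P$ by Legendre duality. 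So the skeleton is the right one.

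However, several steps as written are genuinely gapped or wrong. First, $\pi\colon X\dashrightarrow\mathbb{P}^1$ is only a rational map, so ``$\pi_*\mathcal{L}^{\otimes m}$'' and its weight-$u$ part are not defined; the identification of weight spaces with sections of rounded divisors must instead be proved via normality of $X$ and an invariant-section computation (or via the affine Altmann--Hausen correspondence on an invariant affine cover), and your justification via ``the generic fibre of $\pi$ is a one-dimensional toric variety'' is false --- the generic fibre of the quotient map is an orbit closure of dimension $\dim T$; it is the \emph{base} that is a curve. Second, in the converse direction, closure of $R$ under multiplication only yields superadditivity of the rounded divisors $D_{m,u}$; upgrading this to concavity of a piecewise linear $\mathbb{Q}$-divisor-valued function satisfying condition (3) requires normality (saturatedness of the section ring), a hypothesis your sketch never invokes, so the claimed ``equivalence'' of concavity and multiplicativity is not established. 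Third, that conditions (1) and (2) ``force finite generation with $X$ projective and $\mathcal{L}$ ample'' is precisely the hard content of the theorem --- positivity of $\deg\Phi$ is what makes the associated divisorial fan complete and the polarization ample --- and asserting it is circular. Finally, the assignment $(X,\mathcal{L})\mapsto(\square,\Phi)$ is not well-defined as you set it up: it depends on the linearization of $\mathcal{L}$ (a character twist translates $\square$), on the identification of the rational quotient with $\mathbb{P}^1$ (up to $\mathrm{Aut}\,\mathbb{P}^1$), and on the choice of divisors within their linear equivalence classes (shifting $\Phi$ by principal piecewise linear divisors), so ``one-to-one'' only holds after fixing these normalizations, which your inverse-verification paragraph glosses over entirely.
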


\noindent For more details of this construction, we refer the reader to \cite{ilthsuess, ilthen1}. One can always consider a polarized toric variety with a one-dimension lower subtorus action. Let $\triangle \subset M_{\mathbb{Q}}$ be a lattice polytope in $M$. Then there exists the exact sequence of lattices

$$0 \rightarrow \mathbb{Z} \xrightarrow{\text{F}} M \xrightarrow{\text{deg}} M' \rightarrow 0.$$

\noindent We choose a section $s\colon M' \rightarrow M$, i.e.\ $\text{deg} (s) = \text{id}_{M'}$. Consider the map $\Phi_{\triangle} \colon \text{deg}(\triangle) \rightarrow \text{Div}_{\mathbb{Q}}(\mathbb{P}^1)$ given by 

$$(\Phi_{\triangle})_0(u) = \text{max} \{x \in \mathbb{Q} \ | \ F_{\mathbb{Q}} (x) + s(u) \in \triangle \cap \text{deg}_{\mathbb{Q}}^{-1}(u)\},$$
$$(\Phi_{\triangle})_{\infty}(u) = -\text{min} \{x \in \mathbb{Q} \ | \ F_{\mathbb{Q}} (x) + s(u) \in \triangle \cap \text{deg}_{\mathbb{Q}}^{-1}(u)\}.$$

\vspace{0.1cm}

\noindent Then, $(\Phi_{\triangle}, \text{deg}(\triangle))$ is a divisorial polytope with respect to the restricted effective torus action $T_{N'}$. 
\begin{figure}
\begin{center}
\includegraphics[width=4.5in]{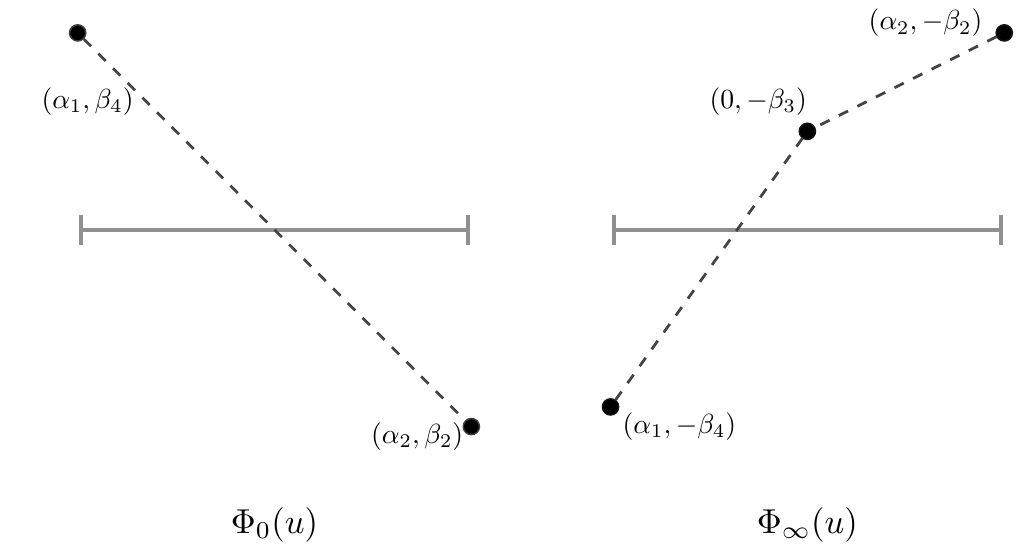}
\caption{The divisorial polytope associated to fake projective plane with weights $(\lambda_1, \lambda_2, \lambda_3)$}
\label{fig2}
\end{center}
\end{figure}

\begin{example}
Consider the dashed triangle $P^{*} \subset M_{\mathbb{Q}}$ from Figure \ref{fig1}. Set $\text{deg} \colon M \xrightarrow{(1,0)} M'$ to be the restricted torus action and choose the section $s \colon M' \xrightarrow{(1,0)^{\intercal}} M$. Note that $P^{*}$ does not need to be a lattice polytope. Let us suppose for the moment that $P^{*}$ is lattice polytope. Then we obtain the lattice polytope $\text{deg}(P^*)=\square \subset M'_{\mathbb{Q}}$ to be the interval $[\alpha_1, \alpha_2]$ and the piecewise linear concave functions $\Phi_0$ and $\Phi_{\infty}$ as in Figure \ref{fig2}.
\label{divisorialpolytopeexample}
\end{example}

\subsection{One-parameter deformations of polarized $T$-varieties}

\noindent A {\em deformation} of a projective algebraic variety $X_0$ is a flat map $\pi \colon \mathcal{X} \longrightarrow S$ with $0 \in S$ such that $\pi^{-1}(0) = X_0$. The variety $\mathcal{X}$ is called the total space and $S$ is called the base space of the deformation. If $S$ is a open subset of $\mathbb{A}^1$, we call $\pi$ a one-parameter deformation. Since we consider polarized $T$-varieties, we will study embedded deformations in this section. Let $X_0 \hookrightarrow \mathbb{P}^N$ be a projectively embedded variety. An {\em embedded deformation} $\pi$ of $X_0$ consists of $\mathcal{X} \hookrightarrow \mathbb{P}_S^N$ such that the projection to $S$ is a deformation of $X_0$ and the embedding of $\mathcal{X}$ restricts to the embedding of $X_0$. Our aim is to construct certain embedded one-parameter deformations of polarized $T$-varieties in terms of Minkowski summands. 
\begin{definition}
Let $(\square,\Phi)$ be a divisorial polytope. An admissible one-parameter Minkowski decomposition of $(\square,\Phi)$ consists of two piecewise affine functions $\Phi_P^0, \Phi_P^1 \colon \square \rightarrow \mathbb{Q}$ for some $P \in \text{Div}_{\mathbb{Q}} \mathbb{P}^1$ such that:
\begin{enumerate}
\item The graph of $\Phi_P^i$ has lattice vertices for $i=0,1$.
\item $\Phi_P(u) = \Phi_P^0 (u) + \Phi_P^1(u)$ for all $u \in \square$.
\item For any full-dimensional polytope $\triangle \subset \square$ on which $\Phi_P$ is affine, $\Phi_P^i$ has non-integral slope for at most one $i \in \{0,1\}$.
\end{enumerate}
\end{definition}
\vspace{0.2cm}

\noindent Given a one-parameter Minkowski decomposition of $(\square, \Phi)$, for any $s\in S$, define the divisorial polytope $\Phi^{\text{(s)}} \colon \square \rightarrow \text{Div}_{\mathbb{Q}} \mathbb{P}^1$ by

$$\Phi^{(s)} (u)= \sum_{P' \neq P} \Phi_{P'} (u) \otimes \mathbb{V}(y_{P'}) + \Phi_P^0(u) \otimes \mathbb{V}(y_P) +  \Phi_P^{1}(u) \otimes \mathbb{V}(y_P - s) $$

\noindent where $y_P \in \mathbb{C} (\mathbb{P}^1)$ is a rational function with its sole zero at $P$. Denote $X(\Phi)$ and  $X(\Phi^{(s)})$ to be the associated polarized $T$-varieties to divisorial polytopes $(\square, \Phi)$ and $(\square, \Phi^{(s)})$.

\begin{theorem}[\cite{ilthen1}, Theorem 7.3.2]
There exists a flat family of T-varieties $\{X(\Phi^{(s)})\}$ over $S$. Moreover, if $\Phi$ is very ample and gives a projectively normal embedding, this deformation can be realized as an embedded deformation of $X(\Phi^{(0)}) \cong X(\Phi)$.
\end{theorem}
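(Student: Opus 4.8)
The plan is to work entirely on the level of the $(M'\times\mathbb{Z})$-graded section algebra and to reduce the statement to a dimension count on $\mathbb{P}^1$. First I would recall the ring attached to a divisorial polytope under the correspondence of \cite{ilthsuess}: the polarization $\mathcal{L}$ on $X(\Phi)$ has section algebra
$$R(\Phi) = \bigoplus_{n\ge 0}\ \bigoplus_{u\in n\square\cap M'} H^0\!\left(\mathbb{P}^1, \big\lfloor n\,\Phi(u/n)\big\rfloor\right)\cdot\chi^u,$$
where each graded piece is the space of rational functions on $\mathbb{P}^1$ with poles bounded by the rounded-down divisor $\lfloor n\Phi(u/n)\rfloor$, and multiplication is governed by the concavity of $\Phi$. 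For the family I would form the $\mathbb{C}[s]$-algebra $\mathcal{R}$ with the same bigrading but with $(n,u)$-piece equal to the sections of the $s$-dependent divisor $\lfloor n\,\Phi^{(s)}(u/n)\rfloor$, realized inside $\mathbb{C}(\mathbb{P}^1)\otimes_{\mathbb{C}}\mathbb{C}[s]$; the associated $T$-variety over $S$ is then $\mathcal{X}=\mathrm{Proj}_S\,\mathcal{R}$. A preliminary check is that $(\square,\Phi^{(s)})$ is a genuine divisorial polytope for every $s$, so that each fiber really is one of the $X(\Phi^{(s)})$.

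The heart of the argument is flatness, which I would obtain by showing that every graded piece $\mathcal{R}_{n,u}$ is a free $\mathbb{C}[s]$-module whose rank $\deg\lfloor n\Phi^{(s)}(u/n)\rfloor+1$ is independent of $s$. Since $\mathbb{C}[s]$ is a principal ideal domain, a finitely generated graded module whose fiber dimension is constant and equal to its generic rank has no torsion, hence is free and thus flat; flatness of $\mathcal{R}$ then follows piece by piece. The degree is a sum of local contributions $\lfloor n\Phi^{(s)}_{P'}(u/n)\rfloor$ over the points $P'$ in the support, and the only terms that move with $s$ are those at the two points $y_P$ and $y_P-s$, carrying the functions $\Phi^0_P$ and $\Phi^1_P$. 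Here the admissibility hypothesis enters decisively: on any region of $\square$ on which $\Phi_P$ is affine, at least one of $\Phi_P^0,\Phi_P^1$ has integral slope, and since its graph has lattice vertices the value $n\Phi_P^i(u/n)=\langle a_i,u\rangle+nb_i$ is already an integer for $u\in M'$. Consequently $\lfloor n\Phi_P^0(u/n)\rfloor+\lfloor n\Phi_P^1(u/n)\rfloor=\lfloor n\Phi_P(u/n)\rfloor$, so the two distinct points at $s\neq 0$ contribute exactly what the single merged point contributes at $s=0$, and the total degree is constant in $s$.

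With flatness in hand I would identify the fibers. For $s\neq 0$ the points $y_P$ and $y_P-s$ are distinct and carry $\Phi^0_P,\Phi^1_P$, so $(\square,\Phi^{(s)})$ is by construction the divisorial polytope of $X(\Phi^{(s)})$. At $s=0$ the two points collide and their coefficient functions add, $\Phi^0_P+\Phi^1_P=\Phi_P$, recovering $(\square,\Phi)$ and hence $X(\Phi^{(0)})\cong X(\Phi)$. For the embedded statement, very ampleness of $\Phi$ together with projective normality means that $R(\Phi)$ is generated in degree one and equals the full homogeneous coordinate ring of $X(\Phi)\hookrightarrow\mathbb{P}^N$ with $N+1=\dim_{\mathbb{C}}R(\Phi)_1$; since the degree-one dimension is constant in $s$, the family $\mathrm{Proj}_S\,\mathcal{R}\hookrightarrow\mathbb{P}^N_S$ restricts over $s=0$ to the given embedding, exhibiting the deformation as an embedded one.

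The main obstacle I anticipate is not the bookkeeping of fibers but the flatness step, and within it the passage from ``constant fiber dimension'' to an honest free $\mathbb{C}[s]$-basis. One must choose $s$-dependent rational functions on $\mathbb{P}^1$ realizing the sections of $\lfloor n\Phi^{(s)}(u/n)\rfloor$ whose specializations at $s=0$ remain linearly independent; this requires a leading-term argument controlling the pole orders at the colliding points $y_P$ and $y_P-s$, and it is precisely the non-integral-slope restriction in the admissibility condition that guarantees no dimension jump occurs there. Verifying concavity and the lattice-vertex conditions for $\Phi^{(s)}$, needed to know the fibers really are the $X(\Phi^{(s)})$, is then routine once the additivity of the floor established above is in place.
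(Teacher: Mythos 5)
This theorem is quoted by the paper from Ilten's thesis (\cite{ilthen1}, Theorem 7.3.2) and no proof is reproduced in the text, so there is no internal proof to compare against; the remarks at the end of Section 4, however, indicate the route of the cited source: such $T$-deformations are obtained by passing to the affine cone over the polarized variety and invoking the deformation theory of affine $T$-varieties via Minkowski decompositions of polyhedral divisors in the style of \cite{alt1} and \cite{ilthrob1}. Your argument takes a genuinely different, and in fact more self-contained, route: you stay with the polarized object, realize everything in the $(M'\times\mathbb{Z})$-graded section ring, and prove flatness by showing that the fiberwise $h^0$ on $\mathbb{P}^1$ is constant in $s$. The crux, the floor identity $\lfloor n\Phi_P^0(u/n)\rfloor+\lfloor n\Phi_P^1(u/n)\rfloor=\lfloor n\Phi_P(u/n)\rfloor$, is correctly derived from the admissibility condition: on each region where $\Phi_P$ is affine at most one summand has non-integral slope, and the lattice-vertex condition forces the integral-slope summand to satisfy $n\Phi_P^i(u/n)\in\mathbb{Z}$ for $u\in M'$, so the floor distributes. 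This is exactly where admissibility enters and your use of it is sound; the cone approach buys the same flatness from general affine theory at the cost of leaving the polarized picture. Two points in your write-up deserve more than the label ``routine'': (i) for the specialization map $\mathcal{R}_{n,u}\otimes k(s_0)\to H^0\bigl(\mathbb{P}^1,\lfloor n\Phi^{(s_0)}(u/n)\rfloor\bigr)$ you do not need an ad hoc leading-term argument --- your relative divisor on $\mathbb{P}^1\times S$ has constant fiberwise degree by the floor identity, and on $\mathbb{P}^1$ both $h^0$ and $h^1$ depend only on the degree, so Grauert's theorem gives local freeness and base change simultaneously; (ii) concavity of the summands $\Phi_P^0,\Phi_P^1$ is not listed in the paper's definition of an admissible decomposition but is genuinely needed, both so that $(\square,\Phi^{(s)})$ is an honest divisorial polytope and so that each summand is affine precisely on the regions where $\Phi_P$ is (which your floor computation tacitly assumes); it should be added as a hypothesis or verified for the decompositions at hand, as it visibly holds in Figures 3 and 5. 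Finally, for the embedded statement, degree-one generation of $\mathcal{R}$ over $\mathbb{C}[s]$ is only an open condition given generation at $s=0$, so a priori you obtain the embedding into $\mathbb{P}^N_S$ after shrinking $S$ around $0$ --- which is all the theorem requires, but worth saying explicitly.
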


\noindent The deformations of above form are called $T${\em-deformations}. These deformations admit the torus $T_N$ of $X(\Phi^{(0)})$ acting on the total space $\mathcal{X}$ and (trivially) on the base space $S$. Moreover, the maps $\pi \colon \mathcal{X} \rightarrow S$ and $X_0 \hookrightarrow X$ are $T_N$-equivariant. 

\section{Comparaison of mutations and deformations}\label{comparsec}

We now recall the mutation of the type of Example \ref{birationalmutation}. Suppose that $\text{Newt}(f)$ contains the origin in its interior. 
\begin{theorem}[\cite{ilthen3}, Theorem 1.3] 
There is a flat projective family $\pi \colon \mathcal{X} \rightarrow \mathbb{P}^1$ such that $\pi^{-1}(0)=\text{TV}(\text{Newt}(f))$ and $\pi^{-1}(\infty)=\text{TV}(\text{Newt}(\phi^* f))$.
\label{ilthencompar}
\end{theorem}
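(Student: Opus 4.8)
The plan is to attach to the factor $F$ of the mutation the one-parameter Minkowski deformation of Theorem 7.3.2 of \cite{ilthen1} and then let its parameter range over all of $\mathbb{P}^1$, so that the two toric surfaces appear as the fibres over $0$ and $\infty$. First I would realize the anticanonically polarized $\text{TV}(\text{Newt}(f))$ as a complexity-one $T$-variety via a degree map $\text{deg}\colon M \to M'$ aligned with the mutation's height function $w$ (as in Examples \ref{combicorres} and \ref{divisorialpolytopeexample}); its divisorial polytope $(\square,\Phi)$ is built from the dual $\text{Newt}(f)^* \subset M_{\mathbb{Q}}$. The factor $F = \text{Newt}(g)$ then dualizes to a Minkowski summand of $\Phi$: the slicing of $\text{Newt}(f)$ along the heights $H_{w,h}$ that underlies the combinatorial mutation becomes an admissible one-parameter Minkowski decomposition $\Phi_P = \Phi_P^0 + \Phi_P^1$ at a distinguished point $P$ of the base $\mathbb{P}^1$, with $\Phi_P^1$ carrying the contribution of $F$.

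Feeding this decomposition into Theorem 7.3.2 of \cite{ilthen1} produces a flat family $\{X(\Phi^{(s)})\}$ placing $\Phi_P^0$ at the fixed point $\mathbb{V}(y_P)$ and $\Phi_P^1$ at the moving point $\mathbb{V}(y_P - s)$. Since $y_P$ has its sole zero at $P$, it is a coordinate identifying the base curve with $\mathbb{P}^1$, vanishing at $P$ and with a single pole at the opposite special point $P_\infty$; the parameter $s$ is thus itself a coordinate on $\mathbb{P}^1$, and the moving summand $\Phi_P^1$ slides from $P$ at $s=0$ towards $P_\infty$ as $s\to\infty$. At $s=0$ the two summands recombine and $\Phi^{(0)}=\Phi$, so $\pi^{-1}(0)\cong X(\Phi)\cong \text{TV}(\text{Newt}(f))$, exactly as recorded in that theorem.

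It then remains to extend the family across $s=\infty$ and identify the fibre there. As $s\to\infty$ the summand $\Phi_P^1$ is transported onto $P_\infty$ and merges with the coefficient already sitting there, and I would verify that the resulting divisorial polytope is precisely the one obtained from $\text{Newt}(f)^*$ by the piecewise-linear transformation $\varphi\colon u\mapsto u-u_{\text{min}}w$ induced by the mutation. Granting this, the identity $(\varphi(\text{Newt}(f)^*))^* = \text{mut}_w(\text{Newt}(f),F;\{G_h\}) = \text{Newt}(\phi^*f)$ from Section \ref{mutations} shows that the limiting polarized surface is $\text{TV}(\text{Newt}(\phi^*f))$, giving $\pi^{-1}(\infty)\cong \text{TV}(\text{Newt}(\phi^*f))$.

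The main obstacle is exactly this behaviour at the added point $\infty$: Theorem 7.3.2 of \cite{ilthen1} supplies the family only over an open $S\subseteq\mathbb{A}^1$ containing $0$, so one must show that it extends to a flat projective family over the whole of $\mathbb{P}^1$ and that the fibre over $\infty$ does not collapse as the moving coefficient reaches $P_\infty$. This is where the cone construction of \cite{ilthen3} enters: passing to the affine cones over the two polarized surfaces realizes the family as a $\mathbb{G}_m$-equivariant deformation of affine $T$-varieties governed by a Minkowski decomposition of a single polyhedral coefficient, to which the affine deformation theory of \cite{alt1} and \cite{ilthrob1} applies. Flatness, the extension across $\infty$, and the identification of the limiting coefficient with $\varphi(\text{Newt}(f)^*)$ can all be checked at this affine level, after which taking $\text{Proj}$ along the $\mathbb{G}_m$-grading returns the sought projective family over $\mathbb{P}^1$. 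Verifying that the admissibility hypothesis persists uniformly in $s$, so that every intermediate fibre is a genuine polarized $T$-variety, is the step requiring the most care.
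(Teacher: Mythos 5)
Your proposal is correct and follows essentially the same route as the paper: building the family from the divisorial polytope of $\text{TV}(\text{Newt}(f))$ via Theorem 7.3.2 of \cite{ilthen1}, with the factor $F$ entering as the moving Minkowski summand $\Phi_P^1$, is exactly the mechanism of the paper's proof of Theorem \ref{compar}, and your handling of the extension across $s=\infty$ --- passing to the affine cones and the affine complexity-one deformation theory of \cite{alt1} and \cite{ilthrob1}, then taking Proj --- is precisely the method the paper attributes to the proof of Theorem \ref{ilthencompar}. Two cosmetic points: $\text{Newt}(f)^{*}$ need not be a lattice polytope, so one should first pass to a dilation $a\,\text{Newt}(f)^{*}$ (as the paper does) before invoking very ampleness and projective normality, and admissibility is a property of the fixed decomposition $\Phi_P=\Phi_P^0+\Phi_P^1$ rather than something to be verified ``uniformly in $s$.''
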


\noindent This deformation is constructed by taking the affine cone over the projective toric varieties and applying the techniques for the deformations of affine complexity-one $T$-varieties. Note also that, as we have shown in Example \ref{combicorres}, this mutation induces a combinatorial mutation. We utilize this combinatorial mutation to understand Proposition \ref{mutationtriangle} in terms of divisorial polytopes.

\begin{theorem}
For the combinatorial mutation $\Psi$, there exists an one-parameter embedded deformation of the polarized variety $\text{TV}(P)$ such that the general fiber is isomorphic to $\text{TV}(Q)$.  
\label{compar}
\end{theorem}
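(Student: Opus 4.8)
The plan is to lift the combinatorial mutation $\Psi$ to an explicit admissible Minkowski decomposition of the divisorial polytope attached to $\text{TV}(P)$, and then feed this decomposition into the $T$-deformation machinery of \cite{ilthen1}. First I would realize the polarized surface $(\text{TV}(P), -K)$ as a polarized complexity-one $T$-variety exactly as in Example \ref{divisorialpolytopeexample}: take $\deg = (1,0)\colon M \to M'$ with section $s = (1,0)^{\intercal}$, so that the mutation's height vector $w = (0,1)$ spans the kernel $F(\mathbb{Z})$ of $\deg$. This compatibility is the crucial point, since the subtorus we quotient by is precisely the one along which $\Psi$ acts. The resulting divisorial polytope is $(\square, \Phi)$ with $\square = \deg(P^*) = [\alpha_1, \alpha_2]$ and $\Phi = \Phi_0 \cdot [0] + \Phi_\infty \cdot [\infty]$, where $\Phi_0$ and $\Phi_\infty$ record the upper and negated lower boundary of $P^*$ over $\square$. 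Because $P^*$ need not be a lattice polytope, I would first replace $-K$ by a sufficiently divisible multiple $-rK$; this rescales $P^*$ to a lattice polytope, makes $\Phi$ very ample with projectively normal embedding (so the embedded form of the deformation theorem applies), and changes neither the underlying varieties nor the isomorphism types of the fibres.

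Next I would write down the decomposition dictated by the mutation. On the dual side $\Psi$ is the piecewise linear map $\varphi\colon u \mapsto u - u_{\min}w$ with $u_{\min} = \min\{0, u_1\}$, so $Q^* = \varphi(P^*)$ is obtained by shearing the part of $P^*$ over $\{u_1 < 0\}$ in the $w$-direction. Reading this off the boundary functions gives $\Phi_0^{Q} = \Phi_0 - \min(0,\cdot)$ and $\Phi_\infty^{Q} = \Phi_\infty + \min(0,\cdot)$, with the degree $\deg \Phi^{Q} = \deg \Phi$ unchanged. Accordingly I would define the admissible one-parameter Minkowski decomposition at the point $\infty \in \mathbb{P}^1$ by $\Phi_\infty = \Phi_\infty^0 + \Phi_\infty^1$, where $\Phi_\infty^1$ is the piecewise affine function $u_1 \mapsto \min(0, u_1)$ whose single slope break at $u_1 = 0$ encodes the factor $F = \text{conv}(0,(1,0)) = \text{Newt}(1 + x_1)$ (cf.\ Example \ref{combicorres}), and $\Phi_\infty^0 = \Phi_\infty - \Phi_\infty^1$ is its complement. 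I would then check the three admissibility conditions: after the rescaling above the graphs of both summands have lattice vertices, their sum is $\Phi_\infty$, and on each maximal region of affinity at most one summand has non-integral slope, since $\Phi_\infty^1$ has integral slopes everywhere and therefore absorbs none of the non-integral slope of $\Phi_\infty$.

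With this decomposition fixed, the $T$-deformation theorem (\cite{ilthen1}, Theorem~7.3.2) produces a flat family $\{X(\Phi^{(s)})\}$ over $S$ whose fibre over $s = 0$ is $X(\Phi^{(0)}) \cong X(\Phi) = \text{TV}(P)$, and the very ampleness and projective normality arranged above make this an embedded one-parameter deformation $\text{TV}(P) \hookrightarrow \mathbb{P}^N$. For $s \neq 0$ the divisorial polytope $\Phi^{(s)}$ is supported on the three points $0$, $\infty$, and the point $P_s$ cut out by $y_\infty - s$, namely $\Phi^{(s)} = \Phi_0 \cdot [0] + \Phi_\infty^0 \cdot [\infty] + \Phi_\infty^1 \cdot [P_s]$; all these fibres are mutually isomorphic via automorphisms of $\mathbb{P}^1$ moving $P_s$, so it suffices to analyse a single nonzero $s$.

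The remaining and hardest step is to identify $X(\Phi^{(s)})$ for $s \neq 0$ with the toric surface $\text{TV}(Q)$, even though it is a priori only a complexity-one $T$-variety presented on three points. Here I would argue that moving the summand $\Phi_\infty^1$ off $\infty$ realizes, after an automorphism of $\mathbb{P}^1$ absorbing $P_s$ together with the piecewise linear shear $\varphi$, the two-point \emph{toric} divisorial polytope $\Phi^{Q} = \Phi_0^{Q}\cdot[0] + \Phi_\infty^{Q}\cdot[\infty]$ of $Q^* = \varphi(P^*)$; the decisive feature is that the moved summand is exactly the toric datum attached to the factor $F$, so the general fibre regains the missing one-parameter symmetry and becomes toric with polytope the combinatorial mutation $\text{mut}_w(P,F;\{G_h\}) = Q$. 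I expect this to be the main obstacle: one must prove that the three-point $T$-variety is genuinely toric and that its polytope is precisely $Q$, rather than merely recording that the degrees match. As a cross-check I would take the affine cone over the family and compare with the mutation-deformation of a Laurent polynomial $f$ with $\text{Newt}(f) = P$, so that Ilten's Theorem \ref{ilthencompar} identifies the mutated fibre with $\text{TV}(\text{Newt}(\phi^*f)) = \text{TV}(Q)$; the resulting weights then agree with Proposition \ref{weighttheorem} and Example \ref{weightsmutation}, confirming the general fibre is the claimed fake weighted projective plane.
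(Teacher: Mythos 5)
Your proposal follows essentially the same route as the paper's own proof: the same downgrade $\deg=(1,0)\colon M\to M'$ to the divisorial polytope $([\alpha_1,\alpha_2],\Phi_0,\Phi_\infty)$ of Example \ref{divisorialpolytopeexample}, the same dilation of $P^*$ to a very ample lattice polytope, an admissible one-parameter Minkowski decomposition of $\Phi_\infty$ whose integral-slope summand with a single break at $0$ encodes the factor $F$, Theorem 7.3.2 of \cite{ilthen1} to produce the embedded flat family, and finally the absorption of the extra coefficient of $\Phi^{(s)}$ to recover the two-point toric presentation of $Q^*$. The differences are cosmetic: your $\Phi_\infty^1=\min(0,u_1)$ is the paper's factor summand up to sign convention and the dilation factor, and the identification step you flag as the ``main obstacle'' is precisely the coefficient-recombination (shifting an affine slope between the coefficients at $\{0\}$, $\mathbb{V}(y_\infty)$, and $\mathbb{V}(y_\infty-s)$) that the paper carries out at the end of its proof.
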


\begin{proof}
Let us consider the mutation $\Psi$ from the lattice polygon $P^* \subset M_{\mathbb{Q}}$ to the lattice polygon $Q^* \subset M_{\mathbb{Q}}$.  A lattice polygon in $M_{\mathbb{Q}}$ is very ample. Since $P^{*}$ is not necessarily a lattice polytope, we consider a dilation of $P^{*}$, i.e.\ the polarized toric variety $(\text{TV}(P), \mathcal{L}_{aP^*})$ for some $a \in \mathbb{Z}_{>0}$.  As we explained in Example \ref{weightsmutation}, the height function is $w=(0,1)$ and the factor is the line segment $F=\text{conv}(0,(x,0))$. Thus, we obtain the following piecewise linear transformation $v \mapsto v \Psi$ where $v=(v_1,v_2)$ and,\\
\begin{center}
 $\Psi=
\left\{
	\begin{array}{ll}
		 \begin{pmatrix}
  1& 0 \\
  0 & 1\\
 \end{pmatrix}  & \mbox{if } v_1>0 \\
		\begin{pmatrix}
  1& x \\
  0 & 1\\
 \end{pmatrix} &  \text{otherwise}
	.\end{array}
\right.$
\end{center}

\noindent Next, we utilize the associated divisorial polytope of $\text{TV}(P)$ from Example \ref{divisorialpolytopeexample}. The subtorus action is constructed by the map $M \xrightarrow{(1,0)} M'$. Without loss of generality, we assume that $(u_0,u_1,u_2)$ are lattice points of the lattice polytope $aP^{*} \in M_{\mathbb{Q}}$. In Figure \ref{fig2}, we observe that the divisorial polytope with respect to the restricted subtorus action consists of the line segment $[\alpha_1, \alpha_2]$ and two piecewise linear functions $\Phi_0$ and $\Phi_{\infty}$.\\
\begin{figure}
\begin{center}
\includegraphics[width=4.5in]{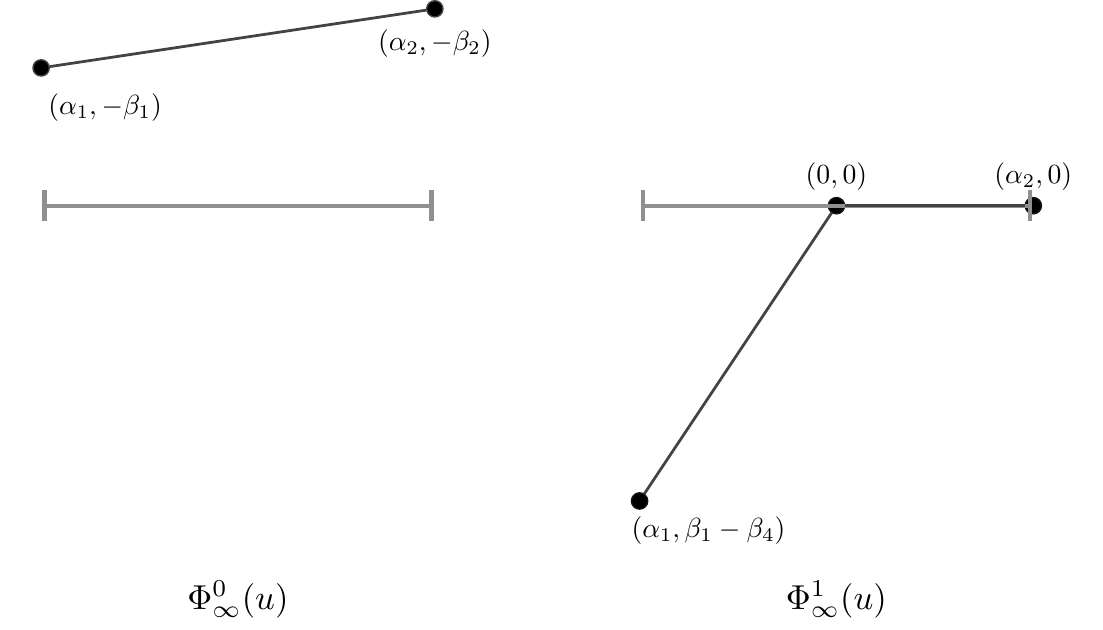}
\caption{An admissible one-parameter Minkowski decomposition of $\Phi_{\infty}$}
\label{fig3}
\end{center}
\end{figure}

\noindent In order to construct an embedded deformation of $\text{TV}(P)$, we examine the following admissible one-parameter Minkowski decomposition of $\Phi_\infty(u) = \Phi^{0}_{\infty}(u) + \Phi^{1}_{\infty}(u)$ as in Figure \ref{fig3}. This decomposition is indeed admissible, since we obtain an integral slope for $\Phi_{\infty}^1$ on interval $[\alpha_1,0]$: $$\frac{\beta_1 - \beta_4}{\alpha_1}=\frac{\beta_1 - \alpha_1 x -\beta_1}{\alpha_1}=-x.$$ Since we would like to obtain a toric variety as the general fiber of this deformation, the associated divisorial polytope $\Phi^{(s)}$ may have at most two nontrivial coefficient. Recall that the general fiber of this deformation is 
\vspace{0.2cm}
$$\Phi^{(s)} (u)=\Phi_{0} (u) \otimes \{0\} + \Phi_{\infty}^0(u) \otimes \mathbb{V}(y_{\infty}) +  \Phi_{\infty}^{1}(u) \otimes \mathbb{V}(y_{\infty} - s).$$
\vspace{0.05cm}

\noindent We get rid of the additional coefficient by adding the slope of $\frac{\beta_2 - \beta_4}{\alpha_2 - \alpha_1}$ of $\Phi_{0}$ to $\Phi_{\infty}^1$. Thus we obtain the coefficient for the divisor $\{0\} \in \text{Div}_{\mathbb{Q}}\mathbb{P}^1$ as the piecewise linear function with slopes $\frac{\beta_1 - \beta_0}{\alpha_1}$ on $[\alpha_1,0]$ and $\frac{\beta_2 - \beta_0}{\alpha_2}$ on $[0,\alpha_2]$. Hence, we conclude that $X(\Phi^{(s)})$ corresponds to the projective toric plane $\text{TV}(Q)$.

\end{proof} 
\vspace{0.2cm}
\noindent One can interpret these embedded deformations as deformations of so-called divisorial fans in lattice $N$. This construction can be found in \cite{ilthsuess}. The $T$-deformations of divisorial polytopes can be also seen as the $T$-deformations of affine cone over $\text{TV}(P)$ which uses a generalized method of deformations of toric varieties in \cite{alt1}. In fact, this is the method which has been used to prove Theorem \ref{ilthencompar}. Moreover, if the coefficients $\Phi_0$ and $\Phi_{\infty}^{i}$ form an admissible one-parameter Minkowski decomposition for some $i \in \{0,1\}$, then we can extend the deformation constructed in Theorem \ref{compar} to a deformation over $\mathbb{P}^1$. \\

\noindent From the proof of Theorem \ref{compar}, one deduces the following interesting fact.

\begin{corollary}
Let $\Phi_{\infty}=\Phi^0_{\infty} + \Phi^1_{\infty}$ be the admissible one-parameter Minkowski decomposition associated to the combinatorial mutation $\Psi$ as in Theorem \ref{compar}. Then $\Phi^0_{\infty}$ is a linear function and $\Phi^1_{\infty}$ has exactly two affine pieces with integral slopes. In particular, the decomposition of the slopes appear as\\
$$\frac{\beta_2-\beta_1}{\alpha_1 - \alpha_2} +\left\{0, \frac{\beta_1 - \beta_4}{\alpha_1}\right\}.$$
\end{corollary}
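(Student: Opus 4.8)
The plan is to obtain the corollary simply by reading off the explicit summands $\Phi_\infty^0$ and $\Phi_\infty^1$ built in the proof of Theorem \ref{compar} (Figure \ref{fig3}) and verifying the three assertions by a short computation. First I would record the shape of $\Phi_\infty$. Because $\text{TV}(P)$ is a fake weighted projective plane, the dilated dual polygon $aP^{*}$ is a triangle, and under the projection $\deg=(1,0)$ used to build the divisorial polytope its lower hull carries a single interior vertex, located at the height $u=0$ at which the combinatorial mutation $\Psi$ (with $w=(0,1)$) operates. Hence $\Phi_\infty$ is a concave piecewise linear function on $\square=[\alpha_1,\alpha_2]$ with exactly one breakpoint, at $u=0$, and therefore exactly two affine pieces over $[\alpha_1,0]$ and $[0,\alpha_2]$, with vertex values $\beta_1,\beta_2,\beta_4$ as recorded in Figure \ref{fig3}.

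Next I would pin down $\Phi_\infty^1$. By construction it is the summand that carries the factor $F=\text{conv}(0,(x,0))$: it is affine with slope $0$ on one of the two pieces and has slope $\frac{\beta_1-\beta_4}{\alpha_1}=-x$ on the other, exactly as computed in the proof of Theorem \ref{compar}. Both of these slopes are integers, the value $-x$ being integral precisely because $F$ is a lattice segment, of lattice length $x$, coming from $\text{Newt}(g)$ as in Example \ref{combicorres}. Thus $\Phi_\infty^1$ consists of exactly two affine pieces with integral slopes, which is the second assertion.

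It then remains to treat $\Phi_\infty^0=\Phi_\infty-\Phi_\infty^1$ and to show that it is linear. Its only possible breakpoint is at $u=0$, and it disappears exactly when $\Phi_\infty^1$ absorbs the entire corner of $\Phi_\infty$, i.e. when the magnitude $x$ of the slope jump of $\Phi_\infty^1$ at $u=0$ equals the concavity defect of $\Phi_\infty$ there. Granting this matching, $\Phi_\infty^0$ is affine on all of $[\alpha_1,\alpha_2]$ with the single secant slope $\frac{\beta_2-\beta_1}{\alpha_1-\alpha_2}$, and writing each slope of $\Phi_\infty$ as the sum of the slope of $\Phi_\infty^0$ and the corresponding slope of $\Phi_\infty^1$ gives the stated form $\frac{\beta_2-\beta_1}{\alpha_1-\alpha_2}+\{0,\frac{\beta_1-\beta_4}{\alpha_1}\}$. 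I expect this last matching to be the only genuine point, rather than a formal rearrangement: one must check that for a Fano triangle the lattice length of the factor of $\Psi$ coincides with the concavity defect of $\Phi_\infty$ at its unique corner, so that no residual kink is left for $\Phi_\infty^0$. This is where the admissibility of the decomposition and the shear description of $\Psi$ on the region $v_1\le 0$ from the proof of Theorem \ref{compar} enter, and it is the step I would write out most carefully.
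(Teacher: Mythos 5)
Your proposal is correct and is essentially the paper's own argument: the paper gives no independent proof of the corollary, deducing it by reading off the decomposition constructed in the proof of Theorem \ref{compar} (Figure \ref{fig3}), exactly as you do, including the admissibility computation $\frac{\beta_1-\beta_4}{\alpha_1}=-x$ showing integrality of the nonzero slope of $\Phi^1_\infty$. The one step you flag as needing care --- that the slope jump of $\Phi^1_\infty$ at $u=0$ exhausts the concavity defect of $\Phi_\infty$, leaving $\Phi^0_\infty$ linear --- closes just as you anticipate: since $\text{TV}(Q)$ is by hypothesis again a fake weighted projective plane, $Q^*=\Psi(P^*)$ is a triangle, and because the shear on $\{v_1\le 0\}$ already creates a new vertex at $u=0$ on the edge coming from $\Phi_0$, it must completely straighten the kink of $\Phi_\infty$ there, forcing the defect to equal the lattice length $x$ of the factor $F$.
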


\begin{example}
Let $P^{*}$ be the dual polytope $\text{conv} ((-3,1),(3,1),(0,-1/2))$. Then $\text{TV}(P)$ is the weighted projective plane with weights $(1,1,4)$. Consider the ample line bundle $\mathcal{O}_{\mathbb{P}(1,1,4)}(4)$. We choose the restricted torus action again as $M \xrightarrow{(1,0)} M'$. Then the associated divisorial polytope consists of lattice polytope $[-6,6] \subset M'_{\mathbb{Q}}$ and the piecewise linear function as in Figure \ref{fig4}. To obtain a deformation with its general fiber isomorphic to $\mathbb{P}^2$, we set the decomposition of $\Phi_{\infty}$ as $\Phi_{\infty}^0$ with slope $-\frac{1}{2}$ and $\Phi_{\infty}^1$ with slopes $\{0,1\}$. It gives us a $\mathbb{Q}$-Gorenstein smoothing of $\mathbb{P}(1,1,4)$ to $\mathbb{P}^2$. 
\begin{figure}[h!]
\begin{center}
\includegraphics[width=4in]{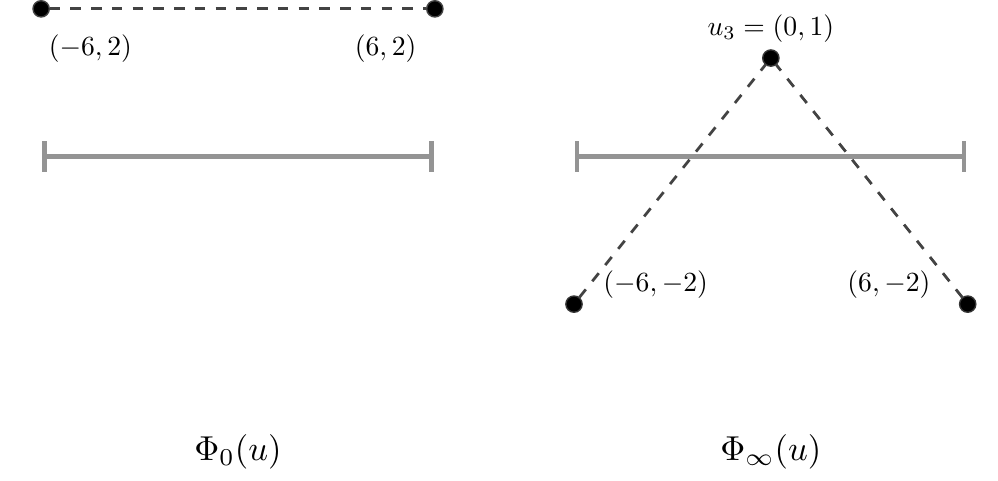}
\caption{A divisorial polytope for the weighted projective plane $\mathbb{P}(1,1,4)$}
\label{fig4}
\end{center}
\end{figure}
\noindent Now consider another admissible decomposition of $\Phi_{\infty}$ as in Figure \ref{fig5}. Note that this decomposition has slopes $\{1/2,0\}$ and $\{0,-1/2\}$. As in the proof of Theorem \ref{compar}, we construct the divisorial polytope $([-6,6], \Phi^{(s)})$. Thus, we obtain the general fiber $X(\Phi^{(s)})$ being isomorphic to $\mathbb{P}^1 \times \mathbb{P}^1$. In particular, this is not a $\mathbb{Q}$-Gorenstein deformation.

\begin{figure}
\begin{center}
\includegraphics[width=4in]{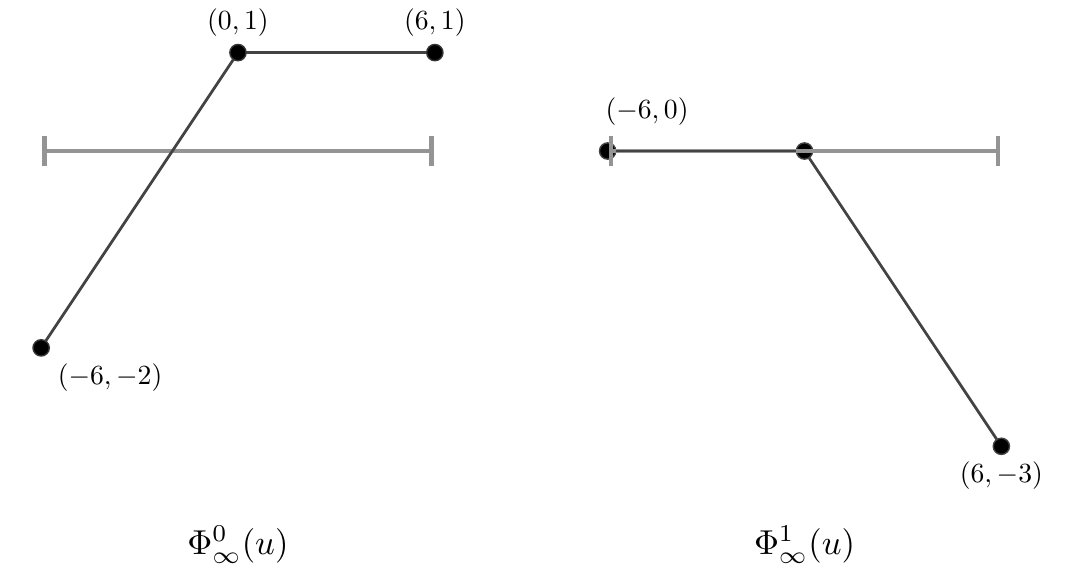}
\caption{A decomposition of $\Phi_{\infty}$ giving a deformation to $\mathbb{P}^1 \times \mathbb{P}^1$}
\label{fig5}
\end{center}
\end{figure}

\end{example}


\Addresses

\end{document}